\journal{Journal of Mathematical Analysis and Applications}
\newtheorem{thm}{Theorem}[section]
\newtheorem{lem}[thm]{Lemma}
\newdefinition{rmk}{Remark}
\newproof{pf}{Proof}
\newproof{pot}{Proof of Theorem \ref{thm2}}
\newtheorem{col}[thm]{Corollary}
\numberwithin{equation}{section}
\begin{document}

\begin{frontmatter}

\title{On the solution of a second order functional differential equation with a state derivative dependent delay}


\author[mymainaddress]{Jiraphorn Somsuwan}
\ead{jira.somsu@hotmail.com}

\author[mysecondaryaddress]{Keaitsuda Maneeruk Nakprasit\corref{mycorrespondingauthor}}
\cortext[mycorrespondingauthor]{Corresponding author Keaitsuda Maneeruk Nakprasit }
\ead{kmaneeruk@hotmail.com}

\address[mymainaddress]{Department of Mathematics, Faculty of Sciences, Khon Kaen University, Thailand}
\address[mysecondaryaddress]{Department of Mathematics, Faculty of Sciences, Khon Kaen University, Thailand}

\begin{abstract}
In this paper, the second order differential equation with a state derivative dependent delay of the form
$a_2x''(z) + a_1x'(z) + a_0x(z)  = x(p(z) + bx'(z))  + h(z)$ has been studied.
Considering a convergent power series $g(z)$ of an auxiliary equation
$a_2 \gamma^{2}  g''(\gamma z) g'(z)  = [g (\gamma^2 z) -  p(g(\gamma z))]  \gamma g'(\gamma z)(g' (z))^{2}
 + bh'(g(z))(g' (z))^{3} + \Big( a_2p''(g(z))+  a_1p'(g(z)) +a_0p(g(z))\Big) (g'(z))^{3}  - a_1\gamma g'(\gamma z) (g' (z))^{2} - a_0g(\gamma z)(g'(z))^{3} + a_2\gamma g'(\gamma z)g''( z)$  with the relation $p(z) + bx'(z) = g(\gamma g^{-1}(z)),$ we obtain an analytic solution $x(z).$
 Moreover, an analytic solution depends on a parameter $\gamma$ which satisfies one of the following conditions: $(H1) \ 0<|\gamma|<1,$ $(H2) \ \gamma = e^{2\pi i \theta }$
 where $\theta$ is a Brjuno number or $(H3) \ \gamma = e^{2\pi i \theta }$ where $\theta$ is a rational number.
\end{abstract}

\begin{keyword}
\texttt{functional differential equation\sep   analytic solution\sep Brjuno condition}
\MSC[2010] 34K05\sep  34A25 \sep 39B32
\end{keyword}

\end{frontmatter}

\linenumbers
\section{Introduction}
The functional differential equation
 $$x^{(m)}(z) = f(z, x^{(m_1)}(z - \tau _{1}(z)), x^{(m_2)}(z - \tau _{2}(z)), \ldots, x^{(m_k)}(z - \tau _{k}(z)))$$
where all  $m_i \geq 0, \tau_i \geq 0$, provides a mathematical model for a physical or biological system in which the rate of change of system is  determined not only by its present state, but also by its history (see \cite{BellmanCooke, Hale}.
In recent years, many authors studied the existence and the uniqueness of an analytic solution of a variety of these equations.
J. G. Si and S. S. Cheng \cite{SiCheng} and T. B. Liu and  H. Li \cite{Tongbo}
presented the existence of analytic solutions of functional differential equations
with a state derivative dependent delay $x'(z) = x(az + bx(z))$  and $  x''(z) +cx'(z)  =  x(az  + bx(z))$, respectively.

In this paper, we study the existence of analytic solutions of the second order differential equation with a state derivative dependent delay of the form
\begin{equation}\label{E:1}
a_2x''(z) + a_1x'(z) + a_0x(z)  = x(p(z) + bx'(z))  + h(z)
\end{equation}
where $a_0, a_1, a_2$ are  complex constants and $p(z), h(z)$ are analytic functions. In order to find analytic solutions of the  equation $(\ref{E:1})$, we construct a corresponding auxiliary equation with parameter $\gamma.$ The existence of solutions of an auxiliary equation depends on the condition of a parameter $\gamma$ that satisfies one of the following conditions: $\gamma$ is in the unit circle, $\gamma$ is a root of unity under Brjuno condition or $\gamma$ is not a root of unity.

First, we state the useful lemma which will be used in the case that parameter $\gamma$ satisfies a Brjuno condition.
Unless otherwise noted, we follow \cite{Brju} for terminology and notation. Let $\theta \in \mathbb{R}\setminus\mathbb{Q}$ and $\{p_n/q_n\}$ denote the sequence of continued fractions associated to $\theta.$

\begin{lem} \label{lemDV1}
$($Davie's lemma \cite{Brju}$).$ Let $$K(n) = n\log2 + \sum_{k=0}^{k(n)} g_k(n) \log(2q_{k+1}).$$
where $k(n)$ is defined by the condition $q_{k(n)} \leq n< q_{k(n)+1}$ and $g_k(n)$ is defined in \cite{Brju}. Then	
	\begin{itemize}
				\item[(a)] 		there is a positive constant $\xi > 0$ $($independent of $n$ and $\theta)$ such that
									   $$ K(n) \leq n ( B(\theta) + \xi )$$ where $B(\theta) = \sum_{n=0}^{\infty} \frac{\log q_{n+1}}{{q_n}},$
				\item[(b)] 		$ K(n_1) + K(n_2) \leq K(n_1 + n_2) $ for all $n_1$ and $n_2$,
				\item[(c)] 		$ -\log | \gamma^n -1 | \leq  K(n)  -  K(n-1).$
		\end{itemize}
\end{lem}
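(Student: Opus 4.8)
The plan is to argue directly from the continued-fraction apparatus of \cite{Brju}, importing the explicit definition of the functions $g_k(n)$ together with the standard facts about the convergents $p_n/q_n$: the recursion $q_{k+1}=a_{k+1}q_k+q_{k-1}$; the best-approximation property that $\|n\theta\|\ge\|q_k\theta\|$ for every $1\le n<q_{k+1}$, where $\|\cdot\|$ denotes distance to the nearest integer; and the two-sided estimate $\frac{1}{q_{k+1}+q_k}\le\|q_k\theta\|\le\frac{1}{q_{k+1}}$. I would establish the three items in the order (c), (b), (a), since (c) is the analytic core while (b) and (a) are combinatorial consequences of the way $K$ is assembled from the $g_k$.

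For (c) I would first convert the small-divisor factor into an arithmetic quantity. Since $\gamma=e^{2\pi i\theta}$, we have $|\gamma^n-1|=2|\sin\pi n\theta|=2\sin(\pi\|n\theta\|)\ge 4\|n\theta\|$, so $-\log|\gamma^n-1|\le-\log\|n\theta\|$, and it suffices to prove $-\log\|n\theta\|\le K(n)-K(n-1)$. For $n$ with $q_k\le n<q_{k+1}$ the best-approximation bound gives $\|n\theta\|\ge\|q_k\theta\|\ge\frac{1}{q_{k+1}+q_k}\ge\frac{1}{2q_{k+1}}$, which reproduces exactly the term $\log(2q_{k+1})$ built into $K$. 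The delicate range is the resonant one, where $n$ lies near a multiple of $q_k$ (or near $q_{k+1}$) and $\|n\theta\|$ drops well below $\frac{1}{2q_{k+1}}$; there one must verify that the increments $g_k(n)-g_k(n-1)$ charge precisely enough copies of $\log(2q_{k+1})$ to dominate $-\log\|n\theta\|$. Carrying out this case analysis against the defining recursion of $g_k$ is the step I expect to be the main obstacle.

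For (b) I would use the convention that $g_k(n)=0$ once $q_k>n$ together with the monotonicity $k(n_1),k(n_2)\le k(n_1+n_2)$, so that all three sums may be taken over the common range $0\le k\le k(n_1+n_2)$. The definition of $g_k$ makes it superadditive in its argument, $g_k(n_1)+g_k(n_2)\le g_k(n_1+n_2)$, and summing this termwise against $\log(2q_{k+1})$ and adding the identity $n_1\log2+n_2\log2=(n_1+n_2)\log2$ yields $K(n_1)+K(n_2)\le K(n_1+n_2)$.

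For (a) the definition furnishes a bound $g_k(n)\le n/q_k$ in which the leading coefficient one is essential, and hence
\[
K(n)\le n\log2+n\sum_{k\ge0}\frac{\log(2q_{k+1})}{q_k}=n\Big(B(\theta)+\log2\,\big(1+\sum_{k\ge0}q_k^{-1}\big)\Big).
\]
Because the $q_k$ grow at least geometrically, $\sum_{k\ge0}q_k^{-1}$ is bounded by an absolute constant; the parenthesis is therefore of the form $B(\theta)+\xi$ with $\xi$ independent of $n$ and $\theta$, which is (a).
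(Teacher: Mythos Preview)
The paper does not prove this lemma; it is quoted as ``Davie's lemma'' with attribution to \cite{Brju} and then used as a black box in the proof of Theorem~\ref{T:2}. There is therefore no proof in the paper against which to compare your proposal.

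That said, one internal inconsistency in your sketch of (c) is worth flagging. You correctly derive from the best-approximation property that $\|n\theta\|\ge 1/(2q_{k+1})$ for every $n$ with $q_k\le n<q_{k+1}$, yet immediately afterward you speak of a ``resonant range'' in which $\|n\theta\|$ ``drops well below $1/(2q_{k+1})$''; by the bound you just proved, that cannot happen. The actual difficulty runs the other way: the increment $K(n)-K(n-1)$ is typically only $\log 2$ (whenever all $g_j(n)=g_j(n-1)$), which is far smaller than $\log(2q_{k+1})$, so one must show that for \emph{most} $n$ in $[q_k,q_{k+1})$ the quantity $\|n\theta\|$ is in fact much larger than $1/(2q_{k+1})$ --- of order $1/q_k$ --- while the exceptional $n$ near a multiple of some $q_j$ (where $\|n\theta\|$ can approach $1/(2q_{j+1})$) are exactly those at which $g_j$ increments by one. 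Your outlines of (a) and (b) match the standard argument.
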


\section{Theorems}
 \indent In order to obtain an analytic solution of the equation $(\ref{E:1})$, we set
\begin{equation}\label{E:1.1}
	y(z) = p(z) + bx'(z).
\end{equation}
Then
$$ x(z) = x(z_{0}) + \frac{1}{b} \int_{z_{0}}^{z} (y(s) - p(s)) ds $$
where $z_0$ is a complex constant.
In particular, we  have
\begin{equation} \label{E:4.2}
	x(y(z)) = x(z_{0}) + \frac 1{b} \int_{z_{0}}^{y(z)} (y(s) - p(s)) ds.
\end{equation}
Applying relations $(\ref{E:1.1})$ and $(\ref{E:4.2})$ to $(\ref{E:1})$, we obtain
\begin{eqnarray} \label{E:5}
\frac{a_2}{b} \Big( y'(z) - p'(z) \Big) + \frac{a_1}{b}\Big( y(z) - p(z)\Big) +a_0(x(z_{0}) + \frac{1}{b} \int_{z_{0}}^{z} (y(s) - p(s)) ds )  \nonumber \\
= x(z_{0}) + \frac{1}{b} \int_{z_{0}}^{y(z)} (y(s) - p(s)) ds + h(z).
\end{eqnarray}
We construct the corresponding equation by differentiating both sides of $(\ref{E:5})$ with respect to $z.$
This yields
\begin{align} \label{E:6}
  a_2(y''(z) - p''(z)) +a_1(y'(z) - p'(z)) + a_0(y(z) - p(z)) \nonumber \\
= (y(y(z)) - p(y(z)))y'(z) + bh'(z).
\end{align}

Consider the auxiliary equation
 \begin{align} \label{E:2}
a_2 \gamma^{2}  g''(\gamma z) g'(z)  &= [g (\gamma^2 z) -  p(g(\gamma z))]  \gamma g'(\gamma z)(g' (z))^{2}
 + bh'(g(z))(g' (z))^{3} \nonumber \\
& \qquad  + \Big( a_2p''(g(z))+  a_1p'(g(z)) +a_0p(g(z))\Big) (g'(z))^{3} \nonumber \\
& \qquad - a_1\gamma g'(\gamma z) (g' (z))^{2} - a_0g(\gamma z)(g'(z))^{3} + a_2\gamma g'(\gamma z)g''( z)
\end{align}
 where $g(z)$ satisfies the initial value conditions $g(0) = 0$ and $g'(0) = \eta \not= 0$ with $\gamma, \eta$ are complex numbers.
 Since $g'(0) =\eta \not= 0$, the equation $(\ref{E:2})$ can be reduced equivalently to the equation
  \begin{align}\label{E:3}
  a_2\gamma g'(\gamma z)  &=  g'(z)\Bigg[a_2\gamma +  \int_{0}^{z} \Big[ \Big (g (\gamma^2 s) -  p(g(\gamma s)) \Big)  \gamma g'(\gamma s)
+ a_2p''(g(s))g'(s) \nonumber  \\
& \qquad \qquad +  a_1p'(g(s))g'(s) + a_0p(g(s))g'(s) - a_1\gamma g'(\gamma s) - a_0 g(\gamma s)g'(s) \nonumber \\
& \qquad \qquad + bh'(g(s))g' (s) \Big]  ds\Bigg]
  \end{align}
 with $g(0)=0$ and $g'(0) = \eta \not=0.$
To construct analytic solutions of $(\ref{E:3})$, we separate our study on the conditions of the parameter $\gamma$ as follows:
\begin{itemize}
	\item[$(H1)$]		$ 0 < |\gamma| < 1;$
	\item[$(H2)$]		$\gamma$ is not a root of unity in the form $\gamma = e^{2 \pi i \theta}$ where $\theta \in \mathbb{R} \setminus \mathbb{Q}$ is a Brjuno number. That is, $B(\theta) = \sum_{n=0}^{\infty} \frac{\log q_{n+1}}{{q_n}} < \infty,$ where $\{p_n/q_n \}$ denotes the sequence of of continued fractions associated to $\theta.$
	\item[$(H3)$]		$\gamma$ is a root of unity in the form $\gamma = e^{2 \pi i q/p}$ where $p \in  \mathbb{N}$ with $p \geq 2$ and $q \in \mathbb{Z} \setminus \{ 0 \},$ and $\gamma \not= e^{2 \pi il/k}$ for all $1 \leq  k \leq p-1$ and $l \in \mathbb{Z} \setminus \{0\}.$
\end{itemize}
From now on, we let $p(z)$ and $h(z)$ be  analytic functions in a neighborhood of the origin.
Then we represent $p(z)$ and $h(z)$ by  power series $\sum_{n=0}^{\infty} p_nz^n$ and  $\sum_{n=0}^{\infty} h_nz^n,$ respectively.

\begin{thm}\label{T:1}
Let $\gamma$ satisfy the condition $(H1)$. Then the equation $(\ref{E:2})$ has an analytic solution
 \begin{equation}  \label{E:4}
g(z) = \sum_{n=1}^{\infty} c_nz^n
\end{equation}
  in a neighborhood of the origin such that $g(0) = 0, g'(0) = \eta$ where $\eta$ is a nonzero complex number.
\end{thm}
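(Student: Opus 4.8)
The plan is to construct $g$ as a formal power series and then prove its convergence by the method of majorants; the essential feature of $(H1)$ is that the divisors arising in the recurrence are bounded away from zero.

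First I would substitute $g(z)=\sum_{n=1}^{\infty}c_nz^n$ with $c_1=\eta$ into the integrated form $(\ref{E:3})$. Writing the bracket on the right-hand side as $a_2\gamma+\sum_{k\ge 1}b_kz^k$, where $b_k$ equals $1/k$ times the coefficient of $z^{k-1}$ in the integrand, and comparing the coefficient of $z^{n-1}$ on both sides, the terms $a_2\gamma g'(\gamma z)$ and $a_2\gamma\,g'(z)$ contribute $a_2 n c_n\gamma^{n}$ and $a_2\gamma n c_n$ respectively. Hence for every $n\ge 2$,
\begin{equation*}
a_2\,n\,\gamma\,(\gamma^{\,n-1}-1)\,c_n=\sum_{k=1}^{n-1}(n-k)\,b_k\,c_{n-k}.
\end{equation*}
A direct inspection of the integrand shows that each $b_k$ is a polynomial in $c_1,\dots,c_k$ and in the Taylor coefficients of $p$ and $h$ (the compositions $p^{(j)}(g)$ and $h'(g)$ entering through Fa\`a di Bruno type expressions), so the right-hand side above depends only on $c_1,\dots,c_{n-1}$. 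Thus, provided $\gamma(\gamma^{\,n-1}-1)\ne 0$, the recurrence determines all $c_n$ uniquely from $c_1=\eta$, while the coefficient of $z^0$ is satisfied identically, confirming that $\eta$ is a free parameter.

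Condition $(H1)$ makes the divisor harmless: since $0<|\gamma|<1$, one has $|\gamma(\gamma^{\,n-1}-1)|\ge|\gamma|\,(1-|\gamma|^{\,n-1})\ge|\gamma|\,(1-|\gamma|)=:\delta>0$ for all $n\ge 2$, so in fact the modulus of the divisor is at least $n\,|a_2|\,\delta$ and grows linearly in $n$. This yields $|c_n|\le (n|a_2|\delta)^{-1}\sum_{k=1}^{n-1}(n-k)\,|b_k|\,|c_{n-k}|$. I would then majorize $p$ and $h$ by geometric series $M/(1-z/\rho)$ on a common disc, bound every power of $\gamma$ by $1$, and introduce a majorant series $G(z)=\sum_{n\ge1}C_nz^n$ with $C_n\ge|c_n|$ satisfying the corresponding majorizing recurrence. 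Since each operation appearing in the integrand, namely multiplication, integration, and composition with $p$ and $h$, is compatible with majorization, $G$ can be realized as the analytic solution of an implicit equation of the form $G=z\,\Phi(G,z)$ with $\Phi$ analytic at the origin; the implicit function theorem then gives $G$ a positive radius of convergence, and therefore so does $g$.

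The coefficient comparison and the check that no $c_n$ occurs on the right of its own defining relation are routine. The main obstacle is the convergence step: the integrand in $(\ref{E:3})$ is long and mixes the compositions $p^{(j)}(g)$ and $h'(g)$ with the rescaled arguments $g(\gamma^2 z)$, $g(\gamma z)$ and $g'(\gamma z)$, so the bookkeeping required to collapse the exact recurrence into a single tractable majorizing equation $G=z\,\Phi(G,z)$, and to verify that $\Phi$ is genuinely analytic near $(0,0)$, is where the real effort lies. The advantage of $(H1)$ is the absence of any small divisor: the divisor grows like $n$, so once the majorant is in place its convergence is dictated solely by the analyticity of $\Phi$.
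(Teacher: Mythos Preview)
Your proposal is correct and follows essentially the same route as the paper: substitute the series into the integrated equation $(\ref{E:3})$, derive a recurrence with divisor $a_2 n\gamma(\gamma^{n-1}-1)$, use $(H1)$ to bound the divisor away from zero, and then prove convergence by a majorant series obtained from the implicit function theorem. The paper differs only in presentation: it writes out the full explicit recurrence for $c_{n+2}$ (with all the Fa\`a di Bruno sums from $p^{(j)}(g)$ and $h'(g)$ displayed) and an explicit closed-form implicit equation $T(z,D)=0$ for the majorant, whereas you package this abstractly as $G=z\,\Phi(G,z)$; also, the paper bounds the divisor via $\lim_{n\to\infty}(\gamma^{n+1}-1)^{-1}=-1$ rather than your reverse-triangle estimate $|\gamma(\gamma^{n-1}-1)|\ge|\gamma|(1-|\gamma|)$, but both give the needed uniform constant.
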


\begin{proof}
Since $p(z)$ is analytic in a neighborhood of the origin, there exists a constant $\rho$ such that $| p_n | \leq  \rho ^{n-1}$ for $n \geq 1$.
 Substituting $(\ref{E:4})$  into  $(\ref{E:3})$ and comparing coefficients of $z^n (n=1,2,\ldots)$, we get
$$  a_2 c_1 \gamma =  a_2  c_1\gamma,  2 a_2 c_2\gamma^2    = 2 a_2 c_2 \gamma  - p_0c_1^2 \gamma + 2a_2p_2c_1^2  + a_1 p_1 c_1^2 + p_0a_0 c_1^2 - a_1 c_1^2 \gamma  + bh_1 c_1^2, $$
 and  in general for $n \geq 1$
 \begin{eqnarray}
 &{}&  \!\!\!\!\!\!\!\!\! \!\!\!\!\!\!\!\!\!
 (n+2)a_2(\gamma^{n+2} - \gamma )c_{n+2}  \nonumber \\
  &=&  \sum_{k=1}^{n}   \sum_{i=1}^{k} \frac{i (n-k+1)\gamma^{2k-i+2}}{k+1} \cdot  c_i c_{k-i+1}  c_{n-k+1}
  -    \sum_{k=1}^{n+1} k p_0 c_k c_{n-k+2} \gamma^{n-k+2}    \nonumber  \\
&{}&  - \sum_{k=1}^{n}   \sum_{i=1}^{k} \frac{i (n-k+1)\gamma^{k+1}}{k+1} \cdot c_i c_{n-k+1}   \quad  \sum _{\makebox [0 pt ]{$%
{{\scriptscriptstyle l_1+\cdots+l_m = k-i+1 \atop {%
{m=1, \ldots,k-i+1 }}}} %
$}} p_m c_{l_1}\cdots c_{l_m}  +  \sum_{k=1}^{n+1} 2k p_2 a_2 c_k c_{n-k+2}   \nonumber \\
&{}& +   \sum_{k=1}^{n}   \sum_{i=1}^{k} \frac{i (n-k+1)a_2}{k+1} \cdot c_i c_{n-k+1} \quad \sum _{\makebox [0 pt ]{$%
{{\scriptscriptstyle l_1+\cdots+l_m = k-i+1 \atop {%
{m=1, \ldots,k-i+1 }}}} %
$}}  (m+2)(m+1)p_{m+2} c_{l_1}\cdots c_{l_m}   \nonumber \\
&{}& + \sum_{k=1}^{n+1} k  p_1 a_1  c_kc_{n-k+2}
 +  \sum_{k=1}^{n}   \sum_{i=1}^{k} \frac{ i (n-k+1)a_1}{k+1} \cdot c_i c_{n-k+1} \quad  \sum _{\makebox [0 pt ]{$%
{{\scriptscriptstyle l_1+\cdots+l_m = k-i+1 \atop {%
{m=1, \ldots,k-i+1 }}}} %
$}} (m+1)p_{m+1} c_{l_1}\cdots c_{l_m}   \nonumber \\
&{}&  +  \sum_{k=1}^{n+1} k p_0 a_0 c_k c_{n-k+2}
 +  \sum_{k=1}^{n}   \sum_{i=1}^{k} \frac{i (n-k+1)a_0 }{k+1} \cdot c_i c_{n-k+1} \quad  \sum _{\makebox [0 pt ]{$%
{{\scriptscriptstyle l_1+\cdots+l_m = k-i+1 \atop {%
{m=1,  \ldots,k-i+1 }}}} %
$}}  p_{m} c_{l_1}\cdots c_{l_m} \nonumber \\
&{}&  -   \sum_{k=1}^{n+1}  k a_1  c_{k}  c_{n-k+2} \gamma^{n-k+2}
 -   \sum_{k=1}^{n}  \sum_{i=1}^{k} \frac{ i(n-k+1)a_0}{k+1} \cdot c_i c_{k-i+1}c_{n-k+1} \gamma^{k-i+1}   \nonumber \\
&{}&  + \sum_{k=1}^{n+1} bkh_1 c_k  c_{n-k+2}
 + \sum_{k=1}^{n} \sum_{i=1}^{k}  \frac{bi(n-k+1)}{k+1}\cdot c_i c_{n-k+1} \quad \sum _{\makebox [0 pt ]{$%
{{\scriptscriptstyle l_1+\cdots+l_m = k-i+1 \atop {%
{m=1, \ldots,k-i+1 }}}} %
$}}  (m+1)h_{m+1} c_{l_1} \cdots c_{l_m}.\nonumber
	 \end{eqnarray}
The first expression allows us to choose $c_1 = \eta \not= 0$  and the second expression implies
 $c_2 =\frac{p_0a_0 + p_1a_1   + 2p_2a_2 + bh_1 - p_0 \gamma  - a_1 \gamma }{2a_2 \gamma (\gamma - 1)} \cdot c_1^{2}. $
 Consequently the sequence  $\{ c_n\}_{n=3}^{\infty}$ is successively determined by the last expression in a unique manner. This implies that $(\ref{E:2})$ has a formal power  series solution. Next, we show that the power series $g(z)$ converges in a neighborhood of the origin.
Since $| p_n | \leq  \rho^{n-1}$ and $\lim_{n \to \infty} \frac{1}{\gamma^{n+1} -1} = -1$ for  $0 < |\gamma| < 1$, there exists a positive constant  $M$ such that
\begin{align}
|c_{n+2}|   &\leq   M \Bigg[  2\sum_{k=1}^{n}   \sum_{i=1}^{k} |c_i|  |c_{k-i+1}|  |c_{n-k+1}| +  6\sum_{k=1}^{n+1} |c_k| |c_{n-k+2}|    \nonumber \\
 & \qquad  \quad +  2\sum_{k=1}^{n}   \sum_{i=1}^{k} | c_i|   |c_{n-k+1}|
  \quad  \sum _{\makebox [0 pt ]{$%
{{\scriptscriptstyle l_1+\cdots+l_m = k-i+1 \atop {%
{m=1,  \ldots,k-i+1 }}}} %
$}}  |c_{l_1}| \cdots |c_{l_m}|   \nonumber \\
&  \qquad  \quad + 2\sum_{k=1}^{n}   \sum_{i=1}^{k} | c_i|  |c_{n-k+1}| \quad \sum _{\makebox [0 pt ]{$%
{{\scriptscriptstyle l_1+\cdots+l_m = k-i+1 \atop {%
{m=1, \ldots,k-i+1 }}}} %
$}} (m+1) |c_{l_1}|\cdots |c_{l_m}|  \nonumber \\
& \qquad  \quad +  \sum_{k=1}^{n}   \sum_{i=1}^{k} | c_i|  |c_{n-k+1}|
 \quad \sum _{\makebox [0 pt ]{$%
{{\scriptscriptstyle l_1+\cdots+l_m = k-i+1 \atop {%
{m=1, \ldots,k-i+1 }}}} %
$}}  (m+2)(m+1) |c_{l_1}|\cdots |c_{l_m}|   \Bigg].  \nonumber
	 \end{align}
Let us define a power series $\sum_{n=1}^{\infty} d_nz^n$  where a positive sequence $\{d_n\}_{n=1}^{\infty}$ is determined  by $d_1 = |\eta|,
d_2 =\Big|\frac{p_0a_0 + p_1a_1   + 2p_2a_2 + bh_1 - p_0 \gamma  - a_1 \gamma  }{2a_2 \gamma (\gamma - 1)} \cdot c_1^{2}\Big|$
and for $n \geq  1$
\begin{align*}
d_{n+2}   &=   M \Bigg[  2\sum_{k=1}^{n}   \sum_{i=1}^{k} d_i  d_{k-i+1}  d_{n-k+1}   +  6\sum_{k=1}^{n+1} d_k d_{n-k+2} \nonumber \\
 & \qquad \qquad +   2\sum_{k=1}^{n}   \sum_{i=1}^{k} d_i   d_{n-k+1}
  \quad  \sum _{\makebox [0 pt ]{$%
{{\scriptscriptstyle l_1+\cdots+l_m = k-i+1 \atop {%
{m=1, \ldots,k-i+1 }}}} %
$}}  d_{l_1} \cdots d_{l_m}  \nonumber \\
& \qquad \qquad +   2\sum_{k=1}^{n}   \sum_{i=1}^{k} d_i  d_{n-k+1}
 \quad \sum _{\makebox [0 pt ]{$%
{{\scriptscriptstyle l_1+\cdots+l_m = k-i+1 \atop {%
{m=1, \ldots,k-i+1 }}}} %
$}} (m+1) d_{l_1}\cdots d_{l_m}  \nonumber \\
& \qquad \qquad +   \sum_{k=1}^{n}   \sum_{i=1}^{k} d_i  d_{n-k+1}
 \quad \sum _{\makebox [0 pt ]{$%
{{\scriptscriptstyle l_1+\cdots+l_m = k-i+1 \atop {%
{m=1, \ldots,k-i+1 }}}} %
$}}  (m+2)(m+1) d_{l_1}\cdots d_{l_m}\Bigg].
	 \end{align*}
It follows that $|c_n| \leq d_n$ for $n \geq 1$. That is, $\sum_{n=1}^{\infty} d_nz^n$  is a majorant series of $\sum_{n=1}^{\infty} c_nz^n.$
We show that $\sum_{n=1}^{\infty} d_nz^n$ is analytic in a neighborhood of the origin.
Note that if we let $D(z) = \sum_{n=1}^{\infty} d_nz^n,$ then
\begin{align*}
D(z) &= \sum_{n=1}^{\infty} d_nz^n  \\
&=   |\eta| z + d_2 z^2 +  \sum_{n=1}^{\infty} d_{n+2}z^{n+2}\\
&=   |\eta| z + d_2 z^2 + M \left[  2\sum_{n=1}^{\infty}\left( \sum_{k=1}^{n}   \sum_{i=1}^{k} d_i  d_{k-i+1}  d_{n-k+1} \right) z^{n+2}
 +  6\sum_{n=1}^{\infty}\left( \sum_{k=1}^{n+1} d_k d_{n-k+2} \right)z^{n+2} \right.  \\
 & \left. \quad +   2\sum_{n=1}^{\infty} \left( \sum_{k=1}^{n}   \sum_{i=1}^{k} d_i   d_{n-k+1}
  \quad  \sum _{\makebox [0 pt ]{$%
{{\scriptscriptstyle l_1+\cdots+l_m = k-i+1 \atop {%
{m=1,  \ldots,k-i+1 }}}} %
$}}  d_{l_1} \cdots d_{l_m}   \right) z^{n+2}   \right.  \\
& \quad \left. +   2\sum_{n=1}^{\infty} \left( \sum_{k=1}^{n}   \sum_{i=1}^{k} d_i  d_{n-k+1}
 \quad   \sum _{\makebox [0 pt ]{$%
{{\scriptscriptstyle l_1+\cdots+l_m = k-i+1 \atop {%
{m=1, \ldots,k-i+1 }}}} %
$}} (m+1) d_{l_1}\cdots d_{l_m}  \right) z^{n+2}  \right.  \\
& \left. \quad +   \sum_{n=1}^{\infty} \left( \sum_{k=1}^{n}   \sum_{i=1}^{k} d_i  d_{n-k+1}
 \quad   \sum _{\makebox [0 pt ]{$%
{{\scriptscriptstyle l_1+\cdots+l_m = k-i+1 \atop {%
{m=1, \ldots,k-i+1 }}}} %
$}}  (m+2)(m+1) d_{l_1}\cdots d_{l_m}  \right) z^{n+2} \right] \\
&=   |\eta| z + d_2  z^2 + M \left[ 2(D(z))^3 + 6(D(z))^2  - 6|\eta|^2z^2  + \frac{2(D(z))^3}{1-D(z)}  \right. \\
& \left. \qquad +  \frac{2(2-D(z)) (D(z))^3}{(D(z) -1 )^2} + \frac{2(D(z))^3 (3 - 3D(z) + (D(z))^2)}{(1 - D(z))^3}  \right].
	 \end{align*}
Consider  the  equation
\begin{align*}
T(z,D(z))
&= D(z) -  |\eta| z - d_2 z^2 -  M \left[ 2(D(z))^3 + 6(D(z))^2  - 6|\eta|^2z^2  + \frac{2(D(z))^3}{1-D(z)}  \right.  \\
& \left. \qquad +  \frac{2(2-D(z)) (D(z))^3}{(D(z) -1 )^2} + \frac{2(D(z))^3 (3 - 3D(z) + (D(z))^2)}{(1 - D(z))^3}  \right]  = 0.
 \end{align*}
 Since $T$ is continuous in a neighborhood of the origin, $T(0,0) = 0$ and $ T'_D(0,0) = 1 \not=0$, the implicit function theorem implies that
there exists a unique function $ D(z)$ which is analytic in a neighborhood of the origin
with a positive radius. Because $D(z)$ is a majorant series of $g(z),$   $g(z)$ is also analytic in a neighborhood of the origin with a positive radius.
This completes the proof.
\end{proof}

Now, we consider an analytic solution $g(z)$ of the auxiliary equation $(\ref{E:2})$ in the case of $\gamma$ satisfies the condition $(H2).$

\begin{thm} \label{T:2}
Assume  that  $\gamma$  satisfies  the condition   $(H2)$. Then  there  exists  an  analytic  solution
$$g(z) = \sum_{n=1}^{\infty} c_n z^n$$  of the  equation $(\ref{E:2})$  in  a  neighborhood  of  the  origin  such  that  $g(0) = 0, g'(0) =  \eta $  where  $\eta$  is  a  nonzero  complex  number.
\end{thm}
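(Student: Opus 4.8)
The plan is to reuse the two-stage scheme of Theorem \ref{T:1}: first produce the formal series, then establish convergence. The new feature under $(H2)$ is that the divisors $\gamma^{n+2}-\gamma$ are no longer bounded away from $0$, so the geometric majorant of Theorem \ref{T:1} must be corrected by Davie's lemma (Lemma \ref{lemDV1}). First I would observe that the coefficient recursion obtained by substituting $g(z)=\sum_{n\ge1}c_nz^n$ into $(\ref{E:3})$ is identical to the one in Theorem \ref{T:1}, since it was derived with no assumption on $\gamma$ beyond $\gamma^{n+2}\ne\gamma$. Under $(H2)$ we have $\gamma=e^{2\pi i\theta}$ with $\theta$ irrational, so $\gamma^{m}=1$ would force $m\theta\in\mathbb{Z}$, impossible for $m\ge1$; hence $\gamma^{n+1}-1\ne0$ for every $n\ge1$ and $\gamma\ne1$. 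Consequently $c_1=\eta$, $c_2$ is given by the same closed form as in Theorem \ref{T:1}, and $\{c_n\}_{n\ge3}$ is uniquely and recursively determined, so a formal solution $(\ref{E:4})$ exists.

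The difficulty is convergence. The leading factor is now $\frac{1}{(n+2)a_2(\gamma^{n+2}-\gamma)}$, and since $|\gamma|=1$ we only have $|\gamma^{n+2}-\gamma|=|\gamma^{n+1}-1|$, which can be arbitrarily small. I would first discard these divisors: define a positive sequence $\{d_n\}$ by $d_1=|\eta|$, $d_2=|c_2|$ and exactly the majorant recursion used in the proof of Theorem \ref{T:1}, with the small-divisor factor replaced by a uniform constant $M$ and the coefficients absorbed through $|p_n|\le\rho^{n-1}$. The argument of Theorem \ref{T:1} then applies verbatim: $D(z)=\sum_{n\ge1}d_nz^n$ satisfies an implicit relation $T(z,D)=0$ with $T(0,0)=0$ and $T'_D(0,0)=1$, so $D$ is analytic near $0$ and in particular $d_n\le C\,\sigma^{n}$ for some constants $C,\sigma>0$.

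Next I would reinstate the divisors through Davie's lemma, proving by induction on $n$ that $|c_n|\le d_n\,e^{K(n-1)}$, with $K(0)=0$ for the base cases. The key bookkeeping is that every monomial on the right-hand side of the recursion is a product $c_{j_1}\cdots c_{j_r}$ whose indices satisfy $j_1+\cdots+j_r=n+2$ with $r\ge2$: the cubic terms have $r=3$, the quadratic terms $r=2$, and the terms carrying $p_m,h_m$ have $r=2+m$. Applying the inductive hypothesis and the superadditivity in Lemma \ref{lemDV1}(b) gives $\sum_{s}K(j_s-1)\le K\!\big(\sum_s(j_s-1)\big)=K(n+2-r)\le K(n)$, so each product is bounded by $e^{K(n)}\prod_s d_{j_s}$. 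Combining this with the divisor estimate $|\gamma^{n+1}-1|^{-1}\le e^{K(n+1)-K(n)}$ from Lemma \ref{lemDV1}(c) yields $|c_{n+2}|\le e^{K(n+1)}d_{n+2}$, which closes the induction.

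Finally, Lemma \ref{lemDV1}(a) gives $K(n-1)\le(n-1)(B(\theta)+\xi)$ with $B(\theta)<\infty$ by the Brjuno condition, so $e^{K(n-1)}$ grows at most geometrically and $\limsup|c_n|^{1/n}\le\sigma\,e^{B(\theta)+\xi}<\infty$; hence $g(z)=\sum c_nz^n$ has positive radius of convergence and is analytic near the origin. The main obstacle, and the only place where $(H2)$ is genuinely used, is this inductive estimate: one must verify that every product on the right carries at least two $c$-factors, so that superadditivity produces precisely the exponent $K(n)$ needed to absorb the small divisor $|\gamma^{n+1}-1|^{-1}$ without overshooting $e^{K(n+1)}$.
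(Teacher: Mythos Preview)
Your proposal is correct and follows essentially the same route as the paper: build the formal series via the recursion from Theorem~\ref{T:1}, construct the divisor-free majorant $D(z)$ via the implicit function theorem to get $D_n\le T^n$, then prove $|c_{n+1}|\le D_{n+1}e^{K(n)}$ by induction using Davie's lemma, and conclude with the growth bound from Lemma~\ref{lemDV1}(a). In fact you spell out the inductive bookkeeping (indices summing to $n+2$ with $r\ge2$ factors, superadditivity giving $\sum_s K(j_s-1)\le K(n)$, then part~(c) absorbing the small divisor) more explicitly than the paper, which simply asserts the inductive inequality.
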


\begin{proof}
We now imitate the proof of Theorem $\ref{T:1}$ with approximate new bound. The sequence $\{c_n \}_{n=1}^{\infty}$ is defined similar to the proof of Theorem $\ref{T:1}.$ Note that  $c_1 = \eta \not= 0$ and $c_2 =\frac{p_0a_0 + p_1a_1   + 2p_2a_2 + bh_1 - p_0 \gamma  - a_1 \gamma  }{2a_2 \gamma (\gamma - 1)} \cdot c_1^{2}.$
Since $|\gamma| = 1$ and $p(z)$ is analytic near the origin, there exists a positive constant $N$ so that for $n \geq 1$
\begin{align*}
|c_{n+2}|   &\leq  \frac{N}{|\gamma^{n+1} - 1 |}  \Bigg[  2\sum_{k=1}^{n}   \sum_{i=1}^{k} |c_i|  |c_{k-i+1}|  |c_{n-k+1}|
+  6\sum_{k=1}^{n+1} |c_k| |c_{n-k+2}|  \nonumber \\
 & \qquad  +   2\sum_{k=1}^{n}   \sum_{i=1}^{k} | c_i|   |c_{n-k+1}|
  \quad  \sum _{\makebox [0 pt ]{$%
{{\scriptscriptstyle l_1+\cdots+l_m = k-i+1 \atop {%
{m=1, \ldots,k-i+1 }}}} %
$}}
  |c_{l_1}| \cdots |c_{l_m}|  \nonumber \\
&  \qquad +   2\sum_{k=1}^{n}   \sum_{i=1}^{k} | c_i|  |c_{n-k+1}|
 \quad \sum _{\makebox [0 pt ]{$%
{{\scriptscriptstyle l_1+\cdots+l_m = k-i+1 \atop {%
{m=1,  \ldots,k-i+1 }}}} %
$}}
  (m+1) |c_{l_1}|\cdots |c_{l_m}|\nonumber \\
& \qquad +   \sum_{k=1}^{n}   \sum_{i=1}^{k} | c_i|  |c_{n-k+1}|
 \quad \sum _{\makebox [0 pt ]{$%
{{\scriptscriptstyle l_1+\cdots+l_m = k-i+1 \atop {%
{m=1,  \ldots,k-i+1 }}}} %
$}}
  (m+2)(m+1) |c_{l_1}|\cdots |c_{l_m}|  \Bigg].
	 \end{align*}
To construct a governing series of $g(z)$, we let $\{ D_n\}_{n=1}^{\infty}$ be a nonnegative sequence determined by $ D_1 = |\eta|,
D_2=\Big|\frac{p_0a_0 + p_1a_1   + 2p_2a_2 + bh_1 - p_0 \gamma  - a_1 \gamma }{2a_2  (\gamma - 1)} \cdot c_1^{2}\Big|$ and for all $n \geq 1$
	 \begin{align*}
D_{n+2} &= N  \Bigg[  2\sum_{k=1}^{n}   \sum_{i=1}^{k} D_i D_{k-i+1} D_{n-k+1}
+  6 \sum_{k=1}^{n+1} D_k D_{n-k+2}  \nonumber \\
 & \qquad \quad  +   2\sum_{k=1}^{n}   \sum_{i=1}^{k} D_i D_{n-k+1}
  \quad  \sum _{\makebox [0 pt ]{$%
{{\scriptscriptstyle l_1+\cdots+l_m = k-i+1 \atop {%
{m=1,  \ldots,k-i+1 }}}} %
$}}
  D_{l_1} \cdots D_{l_m}  \nonumber \\
& \qquad \quad  +   2\sum_{k=1}^{n}   \sum_{i=1}^{k} D_i D_{n-k+1}
 \quad \sum _{\makebox [0 pt ]{$%
{{\scriptscriptstyle l_1+\cdots+l_m = k-i+1 \atop {%
{m=1,  \ldots,k-i+1 }}}} %
$}} (m+1) D_{l_1}\cdots D_{l_m}    \nonumber \\
&  \qquad \quad  +   \sum_{k=1}^{n}   \sum_{i=1}^{k} D_i D_{n-k+1}
 \quad \sum _{\makebox [0 pt ]{$%
{{\scriptscriptstyle l_1+\cdots+l_m = k-i+1 \atop {%
{m=1,  \ldots,k-i+1 }}}} %
$}}
  (m+2)(m+1) D_{l_1}\cdots D_{l_m} \Bigg].
	 \end{align*}
From this construction, we can demonstate that a power series $ D(z) = \sum_{n=1}^{\infty} D_nz^n$ satisfies the implicit functional equation
\begin{align*}
R(z, D(z)) &= D(z)  -  |\eta| z - D_2 z^2 -  N \left[ 2(D(z))^3 + 6(D(z))^2  - 6|\eta|^2z^2  + \frac{2(D(z))^3}{1-D(z)}  \right. \nonumber \\
& \left. \qquad + \frac{ 2(2-D(z)) (D(z))^3}{(D(z) -1 )^2} + \frac{2(D(z))^3 (3 - 3D(z) + (D(z))^2)}{(1 - D(z))^3}  \right]  \nonumber \\
&= 0
\end{align*}
with $R(0,0) =0$ and $R'_D(0,0) = 1 \not= 0.$ This yields the power series $D(z)$ converges in a neighborhood of the origin. Hence, there exists a positive constant $T$ such that $ D_n \leq T^n$ for $n \geq 1.$\\ \indent
Let  $K$ be a function defined as in Lemma $\ref{lemDV1}.$ By mathematical induction, we can show that for $n \in \mathbb{N} \cup \{ 0 \} $
$$ | c_{n+1} | \leq D_{n+1}e^{K(n)}. $$
Lemma $\ref{lemDV1}$ yields $\lim_{n \to \infty} (|c_{n+1}|)^{\frac{1}{n}}  \leq  T e^{B(\theta) + \xi }.$
This implies that $g(z)$ has a convergence radius at least $(T e^{B(\theta) + \xi })^{-1}.$ The proof is completed.
\end{proof}

Finally, we consider the case of $\gamma$ satisfies the condition $(H3).$ In this case, $\gamma$ is not only on the unit circle, but also a root of unity.
Let $\{ R_n \}_{n=1}^{\infty}$ be a sequence defined by $R_1 = | \eta|, R_2 = \Gamma A$ with
$ A =\frac{|p_0a_0 + p_1 a_1  + 2p_2a_2 + bh_1 - p_0 \gamma  - a_1 \gamma  | |\eta^2|}{|2a_2  \gamma|}$,
  $\Gamma = \max \Big\{ \frac{1}{| \gamma -1 |}, \frac{1}{| \gamma^2 -1 |}, \ldots, \frac{1}{| \gamma^{p-1} -1 |}  \Big\},$	
and
\begin{align}
R_{n+2}  &= \Gamma N  \Bigg[ 2 \sum_{k=1}^{n}   \sum_{i=1}^{k} R_i  R_{k-i+1}  R_{n-k+1}   +  6 \sum_{k=1}^{n+1} R_k R_{n-k+2} \nonumber \\
 &  \qquad  \qquad +2\sum_{k=1}^{n}   \sum_{i=1}^{k} R_i   R_{n-k+1}
  \quad  \sum _{\makebox [0 pt ]{$%
{{\scriptscriptstyle l_1+\cdots+l_m = k-i+1 \atop {%
{m=1,  \ldots,k-i+1 }}}} %
$}}
 R_{l_1}\cdots R_{l_m} \nonumber \\
& \qquad  \qquad +  2\sum_{k=1}^{n}   \sum_{i=1}^{k} R_i  R_{n-k+1}
 \quad \sum _{\makebox [0 pt ]{$%
{{\scriptscriptstyle l_1+\cdots+l_m = k-i+1 \atop {%
{m=1,  \ldots,k-i+1 }}}} %
$}} (m+1) R_{l_1}\cdots R_{l_m} \nonumber \\
&  \qquad  \qquad +   \sum_{k=1}^{n}   \sum_{i=1}^{k} R_i  R_{n-k+1}
 \quad \sum _{\makebox [0 pt ]{$%
{{\scriptscriptstyle l_1+\cdots+l_m = k-i+1 \atop {%
{m=1,  \ldots,k-i+1 }}}} %
$}}
  (m+2)(m+1) R_{l_1}\cdots R_{l_m}  \Bigg]  \label{E:10}
\end{align}
where $N$ is a positive constant defined as in the proof of Theorem $\ref{T:2}.$
\begin{thm} \label{T:3}
Assume  that  $\gamma$  satisfies the condition $(H3).$ Let  $g(z)  =\sum_{n=1}^{\infty} c_nz^n$  be  a power  series determined  by
$c_1 = \eta \not=0,  c_2 =\frac{(p_0a_0 +  p_1 a_1 + 2p_2a_2 + bh_1 - p_0 \gamma  - a_1 \gamma  ) c_1^2}{2 a_2 \gamma(\gamma -1)} $
and
\begin{eqnarray*}
(n+2) a_2 \gamma(\gamma^{n+1} - 1 )c_{n+2} = \Theta (n, \gamma), n = 1, 2, \ldots
\end{eqnarray*}
where
\begin{eqnarray*}
\Theta (n, \gamma)
  &=&  \sum_{k=1}^{n}   \sum_{i=1}^{k} \frac{i (n-k+1)\gamma^{2k-i+2}}{k+1} \cdot  c_i c_{k-i+1}  c_{n-k+1}
  - \sum_{k=1}^{n+1} kp_0 c_k c_{n-k+2} \gamma^{n-k+2}  \\
&{}&  - \sum_{k=1}^{n}   \sum_{i=1}^{k} \frac{i (n-k+1)\gamma^{k+1}}{k+1} \cdot c_i c_{n-k+1}   \quad  \sum _{\makebox [0 pt ]{$%
{{\scriptscriptstyle l_1+\cdots+l_m = k-i+1 \atop {%
{m=1,  \ldots,k-i+1 }}}} %
$}} p_m c_{l_1}\cdots c_{l_m}  \\
&{}& +  \sum_{k=1}^{n+1} 2kp_2a_2 c_k c_{n-k+2}   \\
&{}& +   \sum_{k=1}^{n}   \sum_{i=1}^{k} \frac{i (n-k+1)a_2}{k+1} \cdot c_i c_{n-k+1} \quad \sum _{\makebox [0 pt ]{$%
{{\scriptscriptstyle l_1+\cdots+l_m = k-i+1 \atop {%
{m=1,  \ldots,k-i+1 }}}} %
$}}  (m+2)(m+1)p_{m+2} c_{l_1}\cdots c_{l_m}
\end{eqnarray*}
\begin{eqnarray*}
&{}& + \sum_{k=1}^{n+1}  k p_1 a_1  c_kc_{n-k+2}
 +  \sum_{k=1}^{n}   \sum_{i=1}^{k} \frac{ i (n-k+1)a_1}{k+1} \cdot c_i c_{n-k+1} \quad  \sum _{\makebox [0 pt ]{$%
{{\scriptscriptstyle l_1+\cdots+l_m = k-i+1 \atop {%
{m=1,  \ldots,k-i+1 }}}} %
$}} (m+1)p_{m+1} c_{l_1}\cdots c_{l_m}   \\
&{}&  + \sum_{k=1}^{n+1} kp_0 a_0 c_k c_{n-k+2}
 +  \sum_{k=1}^{n}   \sum_{i=1}^{k} \frac{ i (n-k+1)a_0}{k+1} \cdot c_i c_{n-k+1} \quad  \sum _{\makebox [0 pt ]{$%
{{\scriptscriptstyle l_1+\cdots+l_m = k-i+1 \atop {%
{m=1,  \ldots,k-i+1 }}}} %
$}}  p_{m} c_{l_1}\cdots c_{l_m}\\
&{}&  -   \sum_{k=1}^{n+1}   k a_1  c_{k}  c_{n-k+2} \gamma^{n-k+2}
 -   \sum_{k=1}^{n}  \sum_{i=1}^{k} \frac{ i(n-k+1)a_0}{k+1} \cdot c_i c_{k-i+1}c_{n-k+1} \gamma^{k-i+1}  \\
&{}&  + \sum_{k=1}^{n+1}  bkh_1 c_{n-k+2} c_k
  + \sum_{k=1}^{n} \sum_{i=1}^{k}  \frac{bi(n-k+1)}{k+1} \cdot c_i c_{n-k+1} \quad \sum _{\makebox [0 pt ]{$%
{{\scriptscriptstyle l_1+\cdots+l_m = k-i+1 \atop {%
{m=1, \ldots,k-i+1 }}}} %
$}}  (m+1)h_{m+1} c_{l_1} \cdots c_{l_m}.
	 \end{eqnarray*}

If  $\Theta (vp -1, \gamma)= 0$  for  $ v=1,2, \ldots,$ then the equation $(\ref{E:2})$  has  an  analytic  solution  $g(z)$  in  a neighborhood  of  the  origin
such  that  $g(0) =0, g'(0) = \eta \not=0.$

Otherwise, if $\Theta (vp -1, \gamma) \not= 0$  for  some  $v=1,2,\ldots,$ then the equation  $(\ref{E:2})$  has  no  analytic  solution  in   a neighborhood  of  the origin.
\end{thm}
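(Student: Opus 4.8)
The plan is to split the argument according to the resonance structure created by the factor $\gamma^{n+1}-1$ on the left-hand side of the recurrence. First I would record that condition $(H3)$ forces $\gamma$ to be a \emph{primitive} $p$-th root of unity: the extra requirement $\gamma \neq e^{2\pi i l/k}$ for $1 \le k \le p-1$ rules out every proper divisor of $p$ as the order of $\gamma$, so $\gamma^{m}=1$ if and only if $p \mid m$. Consequently the factor $(n+2)a_2\gamma(\gamma^{n+1}-1)$ multiplying $c_{n+2}$ vanishes exactly when $n+1 \equiv 0 \pmod p$, i.e.\ at the resonant indices $n = vp-1$, $v=1,2,\ldots$; at every other index it is nonzero, so $c_{n+2}$ is uniquely determined by $c_1,\dots,c_{n+1}$ via $c_{n+2} = \Theta(n,\gamma)/[(n+2)a_2\gamma(\gamma^{n+1}-1)]$. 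This reduces both assertions to understanding the resonant indices. The coefficient comparison that produces the recurrence is exactly the one carried out in the proof of Theorem \ref{T:1}, so I would not repeat it.

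For the nonexistence assertion I would argue by contradiction. If $g(z)=\sum c_n z^n$ were an analytic, hence formal, solution of $(\ref{E:2})$, its coefficients would have to satisfy the displayed recurrence for every $n$. At a resonant index $n=vp-1$ the left-hand side is $0$, so the recurrence collapses to the solvability condition $0 = \Theta(vp-1,\gamma)$, where $\Theta(vp-1,\gamma)$ is evaluated at the coefficients $c_1,\dots,c_{vp}$ of the putative solution. Hence if $\Theta(vp-1,\gamma)\neq 0$ for some $v$, no sequence $\{c_n\}$ can satisfy the recurrence, so no formal, and a fortiori no analytic, solution exists.

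For the existence assertion I would assume $\Theta(vp-1,\gamma)=0$ for all $v$. Then at each resonant index the consistency condition holds automatically, $c_{vp+1}$ becomes a free parameter, and I would fix it by any choice obeying $|c_{vp+1}|\le R_{vp+1}$ (for instance $c_{vp+1}=0$); at every non-resonant index $c_{n+2}$ is given by the formula above. The heart of the matter is the majorant estimate $|c_n|\le R_n$, proved by induction on $n$. The base cases $n=1,2$ follow since $\Gamma \ge 1/|\gamma-1|$ gives $|c_2|=A/|\gamma-1|\le \Gamma A = R_2$. For the inductive step at a non-resonant index I would bound $|\Theta(n,\gamma)|$ term by term exactly as in Theorem \ref{T:2}, absorbing the combinatorial factors and $1/[(n+2)|a_2\gamma|]$ into the constant $N$, and then use the key inequality $1/|\gamma^{n+1}-1| = 1/|\gamma^{(n+1)\bmod p}-1| \le \Gamma$ together with the induction hypothesis and the defining recurrence $(\ref{E:10})$ to conclude $|c_{n+2}|\le R_{n+2}$; at a resonant index the bound holds by the choice of the free coefficient. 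Finally I would show $\sum R_n z^n$ is analytic near the origin by the device used in Theorem \ref{T:2}: its generating function satisfies an implicit equation analogous to the one there, with $N$ replaced by $\Gamma N$, having value $0$ and $D$-derivative $1\neq 0$ at the origin, so the implicit function theorem yields a positive radius of convergence. Since $\sum R_n z^n$ majorizes $\sum c_n z^n$, the latter converges and defines the required analytic solution $g$.

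The main obstacle I anticipate is the careful treatment of the resonant indices. One must verify both that the vanishing of the left-hand factor yields \emph{precisely} the condition $\Theta(vp-1,\gamma)=0$, so that the dichotomy in the statement is the correct one, and that the majorant induction still closes at the indices where $c_{vp+1}$ is not pinned down by the recurrence. The enabling fact is that $(H3)$ makes $\gamma$ primitive, so $\Gamma$ is a finite bound valid at \emph{every} non-resonant index; securing this uniform control $1/|\gamma^{n+1}-1|\le\Gamma$ is the crux of the convergence argument, and everything else is the bookkeeping already rehearsed in Theorems \ref{T:1} and \ref{T:2}.
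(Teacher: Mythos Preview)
Your proposal is correct and follows essentially the same route as the paper: the contradiction for nonexistence via $\gamma^{vp}-1=0$, the free choice $|c_{vp+1}|\le R_{vp+1}$ at resonant indices, the uniform bound $|\gamma^{n+1}-1|^{-1}\le\Gamma$ at non-resonant indices, and the majorant argument with the implicit function theorem are all exactly what the paper does. Your write-up is in fact somewhat more explicit than the paper's (e.g.\ spelling out why $(H3)$ makes $\gamma$ primitive and handling the base case $|c_2|\le R_2$), and your remark that the implicit equation should carry $\Gamma N$ rather than $N$ is consistent with the recurrence $(\ref{E:10})$.
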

\begin{proof}
Observe  that  if   $\eta = 0$ then  $g(z) \equiv  0$ which is  a  trivial  analytic  solution.
So  we  consider  only  the  case  $\eta \not= 0.$\\
\indent
If $\Theta (vp -1, \gamma) \not= 0$  for  some  positive  number  $v$,  then $ (vp + 1) \gamma(\gamma^{vp} - 1 )c_{vp+1}   \not = 0.$
But  the condition $(H3)$ implies  $\gamma^{vp} - 1 = 0,$ which  is  a  contradiction.
This concludes  that the equation  $(\ref{E:2})$  has  no  analytic  solution in a neighborhood of the origin.\\
\indent
Assume that $\Theta (vp -1, \gamma)= 0$ for $ v=1,2, \ldots.$ Then $(vp+1) \gamma(\gamma^{vp} - 1 )c_{vp+1}  =  0.$
So, there are infinitely many choices of $c_{vp+1}.$  Choose $c_{vp+1} = \eta_{vp+1}$ such that $|\eta_{vp+1}|  \leq  R_{vp+1}$ where $R_{vp+1}$ is defined in $(\ref{E:10}).$\\
Note that $| \gamma^{n+1} |^{-1} \leq \Gamma $ for $n \not= vp -1$ where $\Gamma = \max \left\{\frac{1}{|\gamma -1|}, \frac{1}{|\gamma^2-1|} ,\ldots,\frac{1}{ |\gamma^{p-1} -1|}  \right \}.$
One can see that
\begin{align*}
|c_{n+2}|   &\leq  \Gamma  N \Bigg[  2\sum_{k=1}^{n}   \sum_{i=1}^{k} |c_i|  |c_{k-i+1}|  |c_{n-k+1}|
+  6\sum_{k=1}^{n+1} |c_k| |c_{n-k+2}| \\
 & \qquad \qquad + 2\sum_{k=1}^{n}   \sum_{i=1}^{k} | c_i|   |c_{n-k+1}|
  \quad \sum _{\makebox [0 pt ]{$%
{{\scriptscriptstyle l_1+\cdots+l_m = k-i+1 \atop {%
{m=1, \ldots,k-i+1 }}}} %
$}}
  |c_{l_1}| \cdots |c_{l_m}| \\
& \qquad \qquad + 2\sum_{k=1}^{n}   \sum_{i=1}^{k} | c_i|  |c_{n-k+1}|
 \quad \sum _{\makebox [0 pt ]{$%
{{\scriptscriptstyle l_1+\cdots+l_m = k-i+1 \atop {%
{m=1, \ldots,k-i+1 }}}} %
$}} (m+1) |c_{l_1}|\cdots |c_{l_m}| \\
& \qquad \qquad +   \sum_{k=1}^{n}   \sum_{i=1}^{k} | c_i|  |c_{n-k+1}|
 \quad \sum _{\makebox [0 pt ]{$%
{{\scriptscriptstyle l_1+\cdots+l_m = k-i+1 \atop {%
{m=1, \ldots,k-i+1 }}}} %
$}}
  (m+2)(m+1) |c_{l_1}|\cdots |c_{l_m}| \Bigg]
	 \end{align*}
for $n \not= vp - 1, v=1, 2, \ldots. $ \\ \indent
Likewise, the remaining proof is similar to one of Theorem $\ref{T:1}.$ Consider the implicit functional equation
\begin{align}  \label{E:12}
H(z,R(z))
&= R(z) -  |\eta| z - A z^2 - N \left[ 2(R(z))^3 + 6(R(z))^2  - 6|\eta|^2 z^2  + \frac{2(R(z))^3}{1-R(z)}  \right. \nonumber \\
& \left. \qquad +  \frac{2(2-R(z)) (R(z))^3}{(R(z) -1 )^2} + \frac{2(R(z))^3 (3 - 3R(z) + (R(z))^2)}{(1 - R(z))^3}  \right]  \nonumber \\
&  = 0.
 \end{align}
Since $H(0,0)=0, H'_R(0,0) = 1 \not=0$, the implicit function theorem implies that there exists a unique function $R(z)$ which is analytic in a neighborhood of the origin with a positive radius. We can show that the power series  $R(z) = \sum_{n=1}^{\infty} R_n z^n$ which $\{R_n\}_{n=1}^{\infty}$ is determined by $(\ref{E:10})$ satisfies the equation $(\ref{E:12}).$ Moreover, $|c_n| \leq R_n$ for $n \geq 1.$ That is, $R(z)$ is a majorant series of $g(z).$ Then $g(z)$ converges in a neighborhood of the origin. This completes the proof.
\end{proof}
\section{Results and Discussion}
\begin{thm} \label{T:4}
Let  $g(z) = \sum_{n=1}^{\infty} c_n z^n$ be an analytic solution in a neighborhood of the origin of the equation $(\ref{E:2}),$ with $g(0) =0, g'(0) = \eta \not= 0,$ which is obtained from Theorem $\ref{T:1}$, Theorem $\ref{T:2}$, or Theorem $\ref{T:3}.$
Then the equation $(\ref{E:6})$ has an analytic solution of the form $y(z)= g(\gamma g^{-1}(z))$ in a neighborhood of the origin.
\end{thm}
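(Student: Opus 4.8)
The plan is to verify directly that the function $y(z) = g(\gamma g^{-1}(z))$, built from the auxiliary solution $g$, satisfies the functional differential equation $(\ref{E:6})$. First I would record that $g$ is locally invertible: since $g(0)=0$ and $g'(0)=\eta\neq 0$, the map $g$ is biholomorphic from some disk $U$ centered at the origin onto a neighborhood $V=g(U)$, so an analytic inverse $g^{-1}\colon V\to U$ exists with $g^{-1}(0)=0$. Because $|\gamma|\leq 1$ under each of $(H1)$, $(H2)$, $(H3)$, after shrinking to a smaller neighborhood we can guarantee that $g^{-1}(z)$ and $\gamma g^{-1}(z)$ both lie in $U$; hence $y(z)=g(\gamma g^{-1}(z))$ is well defined and analytic there, with $y(0)=0$.

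The key step is the substitution $w=g^{-1}(z)$, equivalently $z=g(w)$. Under it $y(z)=g(\gamma w)$, and the decisive simplification is
\[
y(y(z)) = g\big(\gamma\, g^{-1}(g(\gamma w))\big) = g(\gamma^2 w),
\]
which uses $g^{-1}\circ g=\mathrm{id}$ on $U$. Differentiating $y(z)=g(\gamma w)$ through $dw/dz=1/g'(w)$ gives
\[
y'(z)=\frac{\gamma g'(\gamma w)}{g'(w)},\qquad
y''(z)=\frac{\gamma^2 g''(\gamma w)g'(w)-\gamma g'(\gamma w)g''(w)}{(g'(w))^3}.
\]
All coefficient functions in $(\ref{E:6})$ evaluated at $z$ become functions of $w$ via $z=g(w)$: one has $p''(z)=p''(g(w))$, $p'(z)=p'(g(w))$, $p(z)=p(g(w))$, $h'(z)=h'(g(w))$, and $p(y(z))=p(g(\gamma w))$.

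Next I would substitute these expressions into $(\ref{E:6})$ and clear denominators by multiplying through by $(g'(w))^3$, which is legitimate since $g'(w)\neq 0$ near the origin. After collecting terms and isolating $a_2\gamma^2 g''(\gamma w)g'(w)$ on one side, the identity that must hold is exactly the auxiliary equation $(\ref{E:2})$ with $z$ renamed to $w$: the product $(g(\gamma^2 w)-p(g(\gamma w)))\gamma g'(\gamma w)(g'(w))^2$, the forcing term $bh'(g(w))(g'(w))^3$, the block $(a_2 p''+a_1 p'+a_0 p)(g(w))(g'(w))^3$, and the remaining linear terms all line up term by term. Since $g$ is assumed to solve $(\ref{E:2})$, this identity holds; as $z\mapsto w=g^{-1}(z)$ is a bijection on the chosen neighborhood, reversing the substitution shows $y$ solves $(\ref{E:6})$ in a neighborhood of the origin.

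The main obstacle is bookkeeping of domains rather than algebra: the identity $y(y(z))=g(\gamma^2 w)$ requires $g^{-1}\circ g$ to act as the identity at the relevant arguments, so I must fix a neighborhood small enough that $g^{-1}(z)$, $\gamma g^{-1}(z)$, $y(z)$, and $\gamma g^{-1}(y(z))$ all remain inside the injectivity disk $U$. The hypothesis $|\gamma|\leq 1$ makes this possible, since multiplication by $\gamma$ maps a disk centered at $0$ into itself, so the composition is legitimate after finitely many shrinkings. Once the domain is fixed, the verification reduces to the purely formal matching of $(\ref{E:6})$ with $(\ref{E:2})$ described above.
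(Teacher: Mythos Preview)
Your proposal is correct and follows essentially the same route as the paper: define $y(z)=g(\gamma g^{-1}(z))$, compute $y'$, $y''$, and $y\circ y$ via the substitution $w=g^{-1}(z)$, and verify equation~(\ref{E:6}) by reducing it to the auxiliary equation~(\ref{E:2}). Your treatment is in fact more careful than the paper's about the domain bookkeeping (the paper simply asserts analyticity of $g^{-1}$ and proceeds formally), but the core argument is identical.
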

\begin{proof}
Since  $ g'(0) = \eta \not= 0,  g^{-1}(z)$  is analytic in a neighborhood of $g(0) = 0$.  \\
Let $y(z)  =  g(\gamma g^{-1}(z)).$ Then
\begin{eqnarray*}
&{}&  \!\!\!\!\!\!\!\!\! \!\!\!\!\!\!\!\!\! \!\!\!  a_2\Big( y''(z) - p''(z) \Big) +  a_1\Big( y'(z) - p'(z)\Big) + a_0\Big( y(z) -p(z)\Big) \nonumber  \\
  &=& \frac{1}{(g^{-1}(z)))^{3}} \Bigg(  a_2 \gamma^{2}  g''(\gamma g^{-1}(z)) g' (g^{-1}(z)) - a_2 \gamma g'(\gamma g^{-1}(z)) g''( g^{-1}(z))  \nonumber \\
 &{}& \qquad - a_2p''(z)(g' (g^{-1}(z)))^{3} +  a_1 \gamma g'(\gamma g^{-1}(z))(g'(g^{-1}(z)))^2  - a_1p'(z)(g'(g^{-1}(z)))^3  \nonumber \\
 &{}& \qquad +  a_0 g(\gamma g^{-1}(z))(g'(g^{-1}(z)))^3 - a_0p(z)(g'(g^{-1}(z)))^3 \Bigg)  \nonumber \\
&=& [g(\gamma g^{-1}(g(\gamma g^{-1}(z)))) -  p(g(\gamma g^{-1}(z)))]  \cdot  \left(\frac{\gamma g'(\gamma g^{-1}(z))}{g'(g^{-1}(z))}\right)
+ bh'(z)\nonumber \\
&=& [y(y(z)) -p(y(z))]y'(z) + bh'(z).
\end{eqnarray*}
That  is, $y(z) = g(\gamma g^{-1}(z))$  is  an  analytic  solution  of the equation $(\ref{E:6}).$ The  proof  is  completed.
\end{proof}

\indent We construct an analytic solution of  $(\ref{E:1})$ from an analytic solution of the equation $(\ref{E:6}).$
Assume that $x(z)$ is an analytic solution of the functional differential equation $(\ref{E:1})$ in a neighborhood of the origin.
Since $x(z)$ is analytic in a neighborhood of the origin, $x(z)$ can be represented by a Taylor's series
		$$x(z) =\sum_{n=0}^{\infty} \frac{x^{(n)}(0)}{n!}\cdot z^n = x(0) + x'(0)z + \frac{x''(0)z^2}{2!} + \cdots.$$
\indent Let $p(z) = \sum_{n=0}^{\infty} p_nz^n$ where  $p_n = \frac{p^{(n)}(0)}{n!}$ for $n \geq  2.$ \\
Since $  x'(z)= \frac{1}{b}(y(z) - p(z)), x''(z) = \frac{1}{b}(y'(z)  - p'(z))$ and $a_2x''(z) + a_1x'(z) + a_0x(z)  = x(p(z) + bx'(z)) + h(z),$
 we have $ x'(0)=  -\frac{p_0}{b}, x''(0)  = \frac{1}{b} (\gamma - p_1), x(0) = \frac{a_0p_0 - a_2(\gamma - p_1) + bh_0}{b(a_0 - 1)}$ and $x'''(0) = \frac{a_0p_0 - p_0\gamma -a_1(\gamma - p_1) + bh_1}{ba_2},$ respectively.   \\
By using mathematical induction, we can show that for $m \geq 1$
\begin{align*}
x^{(m+2)}(0)  &=  \frac{1}{a_2} \Bigg( (x(p(z) + bx'(z)))^{(m)}(0) + h^{(m)}(0) - a_1x^{(m+1)}(0) - a_0x^{(m)}(0) \Bigg) \\
&=   \frac{1}{a_2} \Bigg( \Big( \sum_{i=1}^{m} p_{im} (p'(0) + bx''(0), p''(0) + bx'''(0), \ldots, \nonumber  \\
&  \qquad  p^{(m)}(0) + bx^{(m+1)}(0) ) x^{(i)}(p(0) +bx'(0)) \Big) + h^{(m)}(0) - a_1x^{(m+1)}(0) - a_0x^{(m)}(0)  \Bigg),
\end{align*}
where $p_{im}(1 \leq i \leq m)$ is a polynomial with nonnegative coefficients.\\
Therefore, the explicit form of an analytic solution of our equation is
\begin{align}
x(z) &= x(0) + x'(0)z + \frac{x''(0)}{2!} z^2 + \frac{x''(0)}{3!} z^3 + \sum_{n=4}^{\infty} \frac{x^{(n)}(0)}{n!} z^n   \nonumber \\
&=   \frac{a_0p_0 - a_2(\gamma - p_1) + bh_0}{b(a_0 - 1)} + \left( \frac{-p_0}{b}\right) z + \frac{1}{2!} \cdot \left( \frac{\gamma -  p_1}{b} \right) \cdot z^2  \nonumber \\
& \qquad + \frac{1}{3!} \cdot \left( \frac{a_0p_0 - p_0\gamma -a_1(\gamma - p_1) + bh_1}{ba_2}\right) z^3
+ \sum_{m = 1}^{\infty} \frac{\Gamma _m}{(m+3)!} \cdot z^{m+3} \label{eq.solution}
\end{align}
where $\Gamma _m $ denotes $x^{(m+2)} (0).$

In another way, we can construct the solution of the equation $(\ref{E:1})$ by using the fact that a power series $(\ref{E:4})$ is a solution of the equation $(\ref{E:3}).$\\
Let $g^{-1}(z) = \sum_{n = 1}^{\infty} q_n z^n.$ Then
\begin{align}
 z=\sum_{n = 1}^{\infty} \quad \quad  \sum _{\makebox [0 pt ]{$%
{{\scriptscriptstyle l_1+\cdots+l_m = n \atop {%
{m=1, \ldots,n }}}} %
$}}
 q_m c_{l_1} \cdots c_{l_m}  z^n. \label{eq.g^{-1}}
\end{align}
Comparing coefficients in both sides of $(\ref{eq.g^{-1}})$, we have
\begin{align} \label{eq.g2}
q_1 c_1 = 1
\end{align}
and for $n \geq 2,$ we get
\begin{align} \label{eq.g3}
 \sum _{\makebox [0 pt ]{$%
{{\scriptscriptstyle l_1+\cdots+l_m = n \atop {%
{m=1, \ldots,n }}}} %
$}}
 q_m c_{l_1} \cdots c_{l_m} = 0.
\end{align}
From $(\ref{eq.g2})$ and $(\ref{eq.g3})$, we obtain $q_1 = c_1^{-1}, q_2 =-c_1^{-3}c_2, q_3 = c_1^{-4}(2c_1^{-1}c_2^2 -c_3),$ and so on.
Therefore,
\begin{align*}
x(z) &= x(0) +  \frac{1}{b}   \int_{0}^{z} \Bigg( \sum_{n = 1}^{\infty} \qquad  \sum _{\makebox [0 pt ]{$%
{{\scriptscriptstyle l_1+\cdots+l_m = n \atop {%
{m=1, 2, \ldots,n }}}} %
$}} c_{m} \gamma^m q_{l_1} \cdots q_{l_m} s^n - \sum_{n = 0}^{\infty}p_n s^n \Bigg) ds \\
  &= x(0) +  \left( \frac{-p_0}{b}\right) z + \frac{1}{b} \Bigg[\sum_{n = 1}^{\infty} \frac{1}{n+1}  \qquad \Big(  \sum _{\makebox [0 pt ]{$%
{{\scriptscriptstyle l_1+\cdots+l_m = n \atop {%
{m=1, 2, \ldots,n }}}} %
$}}
 c_{m} \gamma^m q_{l_1} \cdots q_{l_m}  - p_n \Big)  \Bigg] z^{n+1}.
\end{align*}
Let $x(z) = \sum_{n=0}^{\infty} x_n z^n$  with $x_0 = x(0), x_1 = \frac{-p_0}{b}$  and
\begin{align} \label{E:x_n}
x_{n+1} = \frac{1}{b} \Bigg[\sum_{n = 1}^{\infty} \frac{1}{n+1}  \qquad \Big(  \sum _{\makebox [0 pt ]{$%
{{\scriptscriptstyle l_1+\cdots+l_m = n \atop {%
{m=1, 2, \ldots,n }}}} %
$}}c_{m} \gamma^m q_{l_1} \cdots q_{l_m}   - p_n \Big)  \Bigg] z^{n+1}.
\end{align}
From $(\ref{E:x_n}),$ we get $x_2 = \frac{1}{2!}\Bigg(\dfrac{ \gamma   - p_1 }{b} \Bigg)$
and $x_3 =\frac{1}{3!}\Bigg(\dfrac{a_0 p_0 - p_0 \gamma  - a_1(\gamma - p_1) + bh_1}{ba_2}\Bigg),$ and so on. These yield the explicit solution as in  $(\ref{eq.solution}).$\\
{\bf Example}
The analytic solution of the equation \\ $(1-2i)x''(z) + (1+i)x'(z) + (3i)x(z) = x(2+i +(2i)z + z^2 + (1+i)x'(z)) + 2 + (2-i)z + z^2$
is
\begin{align*}
x(z) &=   \frac{3i(2+i) - (1-2i)(\gamma -2i) + (1+i)(2)}{(1+i)(3i-1)}  - \left( \frac{ 2+i }{1+i}\right) z + \frac{1}{2!} \cdot \left( \frac{\gamma -  2i}{1+i} \right) \cdot z^2 \\
& \qquad + \frac{1}{3!} \cdot \left( \frac{(3i)(2+i ) - (2+i ) \gamma  - (1+i)(\gamma - 2i) + (1+i) (2-i)}{(1+i) (1-2i) }\right) z^3 + \cdots\\
&=   \frac{10i +3 - \gamma (1-2i)}{2i - 4}  - \left( \frac{2+i}{1+i}\right) z + \frac{1}{2!} \cdot \left( \frac{\gamma -  2i}{1+i} \right) \cdot z^2 \\
& \qquad + \frac{1}{3!} \cdot \left( \frac{9i -2 - \gamma (3+2i )  }{3 - i}\right) z^3
+ \cdots.
\end{align*}
\begin{col}
Some special situations of  the equation $(\ref{E:1})$ yield  the corresponding results as follows.
    \begin{enumerate}
    \item  If $a_2 = a_0 =0, a_1=1$ and $h(z) \equiv 0,$ then the equation $(\ref{E:1})$ reduces to the equation $x'(z) = x(p(z) + bx'(z))$
and we obtain the corresponding result in \cite{pingping}.

    \item   If $a_1 = a_0 = 0, a_2=1, p(z) = az$ and $h(z) \equiv 0,$ then the equation $(\ref{E:1})$ reduces to the equation $x''(z) = x(az + bx'(z))$
and we obtain the corresponding result in \cite{SiWang}.

    \item   If $ a_2 = 0,  p(z) = az, x(z) = z$ and $h(z) \equiv 0,$ then the equation $(\ref{E:1})$ reduces to the equation  $a_0z + a_1x'(z) = x(az + bx'(z))$ and we obtain the corresponding result in  \cite{JSXS}.
  \end{enumerate}      
\end{col}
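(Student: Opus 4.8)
The plan is to treat each of the three items separately, in every case by substituting the prescribed parameter values directly into equation (\ref{E:1}), simplifying, and then observing that the existence results already established in Theorems \ref{T:1}--\ref{T:4} specialize to recover the conclusions of the cited works. Since the left-hand side of (\ref{E:1}) is linear in $a_0, a_1, a_2$ and the forcing term is $h$, each reduction is essentially algebraic bookkeeping rather than a new analytic argument; the only real content is to confirm that the auxiliary-equation construction and the solution representation (\ref{eq.solution}) remain applicable after specialization.

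First, for item (1) I would set $a_2 = a_0 = 0$, $a_1 = 1$ and $h \equiv 0$ in (\ref{E:1}). The left-hand side collapses to $x'(z)$ and the term $h(z)$ vanishes, so (\ref{E:1}) becomes $x'(z) = x(p(z) + bx'(z))$, which is exactly the equation studied in \cite{pingping}; the existence of an analytic solution then follows from the corresponding specialization of Theorems \ref{T:1}--\ref{T:4}. Similarly, for item (2) I would put $a_1 = a_0 = 0$, $a_2 = 1$, $p(z) = az$ and $h \equiv 0$, which leaves only the second-order term on the left and replaces the delay argument $p(z)$ by $az$, giving $x''(z) = x(az + bx'(z))$ as in \cite{SiWang}. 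In these two cases the auxiliary equation (\ref{E:2}) degenerates consistently (with $a_2 \neq 0$ in the second case), so the analytic solution produced by our theorems coincides with the one constructed in the respective reference.

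The third item is where I expect the only genuine obstacle to lie. Setting $a_2 = 0$, $p(z) = az$ and $h \equiv 0$ reduces (\ref{E:1}) to $a_1 x'(z) + a_0 x(z) = x(az + bx'(z))$, and the further substitution $x(z) = z$ in the zeroth-order term yields $a_0 z + a_1 x'(z) = x(az + bx'(z))$, the equation of \cite{JSXS}. However, the machinery of Theorems \ref{T:1}--\ref{T:4} was built on the assumption $a_2 \neq 0$: the auxiliary equation (\ref{E:2}) carries $a_2$ in its leading term $a_2 \gamma^2 g''(\gamma z) g'(z)$, and the recurrence determining the coefficients $c_{n+2}$ divides by $a_2$. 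Consequently, when $a_2 = 0$ one cannot simply quote those theorems, and the reduction to a first-order functional differential equation must instead be handled by the separate construction carried out in \cite{JSXS}. Verifying that this case is legitimately covered---by appealing directly to \cite{JSXS} rather than to our theorems---is the step that requires the most care. Once each of the three reduced equations has been matched with its source, the corollary follows.
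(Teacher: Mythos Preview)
The paper offers no proof of this corollary at all; it is stated immediately after the example and before the acknowledgements, so the intended ``proof'' is simply the direct substitution of the listed parameter values into equation~(\ref{E:1}) together with a pointer to the cited reference in each case. Your plan of verifying each reduction by substitution is therefore exactly what the paper has in mind, and your additional care in checking whether Theorems~\ref{T:1}--\ref{T:4} still apply after specialization goes somewhat beyond what the authors do.

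There is, however, an internal inconsistency in your proposal. You correctly observe for item~(3) that setting $a_2=0$ breaks the machinery of Theorems~\ref{T:1}--\ref{T:4}, since the recurrence $(n+2)a_2(\gamma^{n+2}-\gamma)c_{n+2}=\Theta(n,\gamma)$ can no longer be solved for $c_{n+2}$, and that one must therefore appeal directly to \cite{JSXS}. But the identical obstruction arises in item~(1), where you also have $a_2=0$; your parenthetical ``(with $a_2\neq 0$ in the second case)'' tacitly acknowledges this, yet you still claim that ``the analytic solution produced by our theorems coincides with the one constructed in the respective reference'' for both items~(1) and~(2). That claim is unjustified for item~(1): the auxiliary equation~(\ref{E:2}) and its consequences genuinely require $a_2\neq 0$, so item~(1) must be handled in the same way you propose for item~(3), namely by invoking \cite{pingping} directly rather than by specializing the present theorems. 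Once you treat items~(1) and~(3) uniformly in this way, the argument is complete and matches the paper's (implicit) reasoning.
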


\section{Acknowledgements}
The first author is supported in part by Development and Promotion of Science and Talents Project (DPST). The second author is supported by National  Research Council of Thailand and Khon Kaen University, Thailand (Grant number: kku fmis(580010)).

\section*{References}
\begin{enumerate}

\bibitem{BellmanCooke}
	R. E. Bellman and K. L. Cooke,  Differential-difference equations, Academic Press, New York, 1963.

\bibitem{Hale}
	J. K. Hale, Theory of functional differential equations, Springer-Verlag, New York, 1977.

\bibitem{Tongbo}
	T. B. Liu and  H. Li, Analytic solutions of  an iterative functional differential equation near resonance,
Int. J. Diff. Equ. 2009; Art. ID 145213, 14pp.

\bibitem{SiCheng}
	J. G. Si and S. S. Cheng, Analytic solutions of a functional-differential equation with state dependent argument,
Taiwanese  J. Math. 1(4) (1997), 471-480.

\bibitem{SiWang}
	J. G. Si and X. Wang, Analytic solutions of a second-order functional differential equation with a state derivative dependent delay,
Colloq. Math. 79(2) (1999), 273--281.

\bibitem{JSXS}
	J. G. Si , X. P. Wang and  S. S. Cheng, Analytic solutions of a functional-differential equation
with a state derivative dependent delay, Aequationes Math.  57(1)(1999), 75--86.

\bibitem{pingping}
	P. Zhang, Analytic  solutions  of  a  first  order  functional  differential  equation  with  a   state  derivative  dependent  delay,
 Electron. J. Differential Equations. 2009(51) (2009), 8pp.

\bibitem{Brju}
	Saenz Maldonado Edgar Arturo. Brjuno Numbers and Complex Dynamics [Master Thesis of Science in Mathematics]. Virginia: Virginia Polytechnic Institute and State University; 2008.

\end{enumerate}

\bibliography{mybibfile}

\end{document}